\theoremstyle{plain}
\newtheorem{theorem}{Theorem}[section]
\newtheorem{lemma}[theorem]{Lemma}
\newtheorem{proposition}[theorem]{Proposition}
\newtheorem{corollary}[theorem]{Corollary}
\theoremstyle{definition}
\newtheorem{question}[theorem]{Question}
\theoremstyle{remark}
\begin{document}
\title{On trees with the same restricted $U$-polynomial and the Prouhet-Tarry-Escott problem}
\author{Jos\'e Aliste-Prieto}

\address{Departamento de Matem\'aticas, Universidad Andres Bello, Republica 220, Santiago, Chile}
\email{jose.aliste@unab.cl}
\author{Anna de Mier}
\address{Departament de Matem\`atiques, Universitat Polit\`ecnica de Catalunya
, Jordi Girona 1-3, 08034 Barcelona, Spain
}
\email{anna.de.mier@upc.edu}
\author{Jos\'e Zamora}
\address{Departamento de Matem\'aticas, Universidad Andres Bello, Republica 220, Santiago, Chile}
\email{josezamora@unab.cl}

\date{\today}
\maketitle

\begin{abstract}
This paper focuses on the well-known problem due to Stanley of whether two non-isomorphic trees
can have the same $U$-polynomial (or, equivalently, the same chromatic symmetric function). We consider the $U_k$-polynomial, which is a restricted version of $U$-polynomial, and construct with the help of solutions of the Prouhet-Tarry-Escott problem, non-isomorphic trees with the same $U_k$-polynomial for any given $k$. By doing so, we also find a new class of trees that are distinguished by the $U$-polynomial up to isomorphism. 
\end{abstract}
\section{Introduction}
The aim of this paper is to contribute towards a solution of Stanley's question of whether there exist two non-isomorphic trees with the same chromatic symmetric function. 

The $U$-polynomial (introduced by Noble and Welsh \cite{noble99weighted}) and the chromatic symmetric function $X_G$ (introduced by Stanley \cite{Stanley95symmetric}) of a graph $G$ are powerful graph isomorphism invariants. They encode much of the combinatorics of the given graph. In particular, many other well-known invariants such as the Tutte polynomial and the chromatic polynomial can be obtained as evaluations of them.

The main problem about any graph invariant is to understand which classes of graph it distinguishes. One way of approaching this problem
is by finding graphs with the same invariant. For the chromatic symmetric function, such examples already appear in \cite{Stanley95symmetric}. For the $U$-polynomial, by results of Sarmiento \cite{sarmiento2000polychromate},
one checks that work of Brylawski \cite{brylawski1981intersection} in the context of the polychromate provides  such examples. However, the following question by Stanley \cite{Stanley95symmetric} remains unsolved. 
\begin{question}[Stanley's question]
Do there exist non-isomorphic trees with the same chromatic symmetric function? 
\end{question}
We note that it is well known that Stanley's question is equivalent to the similar question for the $U$-polynomial, since, when restricted to trees, the chromatic symmetric function and the $U$-polynomial determine each other (see \cite[Theorem 6.1]{noble99weighted}). This means, in particular, that any statement related to the chromatic symmetric function of a tree can be rewritten in terms of the $U$-polynomial. In this article, we prefer to write everything in terms of the $U$-polynomial. 

There are several special classes of trees where the restriction of Stanley's question has as solution. In \cite{martin2008distinguishing}, Martin, Morin and Wagner showed that the $U$-polynomial distinguishes spider trees and a subclass of caterpillars (they also showed how to compute much of the combinatorics of a tree from the coefficients of its $U$-polynomial). In \cite{aliste2014proper}, the first and third author showed that the $U$-polynomial distinguishes the larger class of all proper caterpillars. It is still unknown to the authors whether the $U$-polynomial distinguishes non-proper caterpillars. In a different direction, in \cite{orellana2014graphs}, Orellana and Scott showed how to reconstruct a tree from a \emph{labeled} version of the $U_2$-polynomial, provided the tree has a unique vertex as a centroid (the centroid and the $U_2$-polynomial are defined later on in this article, for the meaning of what labeled means in this context, we refer the reader directly to \cite{orellana2014graphs}). More recently, in \cite{smith2015symmetric}, Smith, Smith and Tian have extended Orellana and Scott's results to show that a \emph{labeled} version of the $U_3$-polynomial suffices to reconstruct any tree. Finally, Loebl and Sereni \cite{loebl2014potts} have developed some techniques for constructing families of weighted graphs that are distinguished by the $W$-polynomial, which is the weighted version of the $U$-polynomial.

In this paper, we consider a restricted version of the $U$-polynomial, which we 
call the $U_k$-polynomial for any fixed integer $k$. Our main result is to exhibit examples of non-isomorphic trees with the same $U_k$-polynomial 
for every $k$. One of the motivations for this work comes from Orellana and Scott's results, in the sense that our examples could shed some light about possible obstructions for extending Orellana and Scott's results into a solution of the Stanley's question. This is emphasized by the fact that our examples generalize 
some of the examples already found in \cite{orellana2014graphs} (see Figure 2). Let us note that Smith, Smith and Tian \cite{smith2015symmetric} have found, with the aid of the computer, the smallest pairs of non-isomorphic trees with the same $U_k$-polynomial for $k\in\{1,2,3,4\}$.

In order to construct our examples, we reduce the problem to finding solutions of an old problem in number theory known 
as the \emph{Prouhet-Tarry-Escott problem} (PTE problem for short).  Given $k$ be a positive integer,
we ask whether there exist integer sequences  $a = (a_1,\ldots, a_n)$ and $b = (b_1,\ldots, b_n)$, distinct up to permutation, such that 
\begin{equation}
\label{eq:PTE}
\sum_{i=1}^n a_i^{\ell}=\sum_{i=1}^n b_i^{\ell} \quad \mbox{ for all } 1\leq \ell \leq k.
\end{equation}
If $a$ and $b$  are solutions for this problem  for some $k\geq 1$,  we denote it by $a =_k b$ for short. We call $k$ the \emph{degree} of the solution
and the length the sequences its \emph{size} (note that some terms could be zero). The history of the PTE problem probably goes back to Euler and Goldbach (1750-51) who noted that 
\[(a,b,c,a+b+c) =_2 (0,a+b,a+c,b+c).\]
Independently, Prouhet (1851) and Tarry and Escott (1910) showed that that for every $k$ there are solutions to the PTE-problem. For more history and results related to the PTE problem, we refer
to the reader \cite{wright1959prouhet,borwein2002computational}. We also note that once one solution for the PTE problem has been found, many other \emph{equivalent} solutions can be easily constructed. This follows from the fact that, if $a =_k b$ and $f(t) = \alpha t + \beta$ is an affine transformation with integer coefficients, then it is easy to check that $f(a) =_k f(b)$. For convenience, we usually write $\alpha a + \beta$ instead of $f(a)$. For instance, if $a =_k b$, then $a + 1 =_k b +1$ and $\alpha - a  =_k \alpha - b$ for every integer $\alpha$.

This paper is organized as follows. Background and statement of the main results is done in Section~\ref{sec:back} while Section 3 and 4 are devoted to the proofs. 

\section{Background and Results}\label{sec:back}
In this section we recall the definitions of the $U$-polynomial and then state our results. It will be convenient to first recall the definition of the $W$-polynomial for weighted graphs.  Given a graph $G$, a \emph{weight function} is a map $\omega$ from the vertices of $G$ to the positive integers. A \emph{weighted graph} is a graph $G$ endowed
with a weight function $\omega$, denoted by $(G,\omega)$. 

The $W$-polynomial of a weighted graph  was introduced in \cite{noble99weighted} by means of a \emph{deletion-contraction} formula. Four our purposes, it is easier to use its  states model representation (see \cite[Theorem 4.3]{noble99weighted}) as a definition. We need some notation first. Let $(G,\omega)$ be a weighted graph with $G=(V,E)$. The number of connected components of $G$ is denoted by $k(G)$.  Given $A\subseteq E$, the restriction $G|_A$ of $G$ to $A$ is obtained by deleting every edge that is not contained in $A$ (but keeping all the vertices). The \emph{rank} of $A$, denoted by $r(A)$, is 
defined as 
\[r_G(A) = |V| - k(G|_A).\]
The \emph{partition} induced by $A$, denoted by $\lambda_G(A)$, is the partition of $|V|$ determined by the total weight of the vertices in each connected component  of $G|_A$.  When $G$ is clear from the context, we write $\lambda(A)$ and $r(A)$ instead of $\lambda_G(A)$ and $r_G(A)$. Let $\mathbf{x} = x_1,x_2,\ldots$ be an infinite set of commuting indeterminates. Given any partition $\lambda$, we encode it as the monomial  $\mathbf{x}_\lambda:=x_{\lambda_1}\cdots x_{\lambda_l}$. The {\em $W$-polynomial} of  $(G,\omega)$  is defined as 
\begin{equation}
\label{def:u_poly}
W(G,\omega;\mathbf x, y)=\sum_{A\subseteq E}\mathbf x_{\lambda(A)}(y-1)^{|A|-r(A)}.
\end{equation}
If $G$ is a (unweighted) graph, then the \emph{$U$-polynomial} of $G$ is defined
as the $W$-polynomial of $(G,1_V)$ where $1_V$ is the weight function that gives weight $1$ to each vertex of $G$. Here, $\lambda(A)$ reduces to the partition induced
by the number of vertices on each connected component of $G|_A$.

In what follows, all graphs are assumed to be trees. In this case, it is easy to check that $r(A)=|A|$ for every $A\subseteq E$. It follows that the $U$-polynomial of a tree $T$ can be rewritten as
\[U(T) = U(T;\mathbf x) = \sum_{A\subseteq E}\mathbf x_{\lambda(A)} = 
\sum_{A\subseteq E}\mathbf x_{\lambda(E\setminus A)}.\]
In this note, we focus on the following restricted-version of the $U$-polynomial. Given a positive integer $k$, let 
\[U_{k}(T) = U_{k}(T;\mathbf x) = \sum_{A\subseteq E,|A| \leq k}\mathbf x_{\lambda(E\setminus A)},\]
that is, we restrict the cardinality of the edge sets appearing in the definition of the $U$-polynomial. Of course, if $k = |E|$, then $U(T;\mathbf{x})=U_k(T;\mathbf{x})$. 

The main goal of this note is to exhibit examples of non-isomorphic trees with the same $U_k$-polynomial for any given $k$. To do so, we first introduce a new class of trees encoded by non-negative integer sequences. As a convention, the term \emph{$n$-star} will refer to a star $K_{1,n-1}$ with $n$ vertices, and the term \emph{$n$-path} will refer to a path with $n$ vertices. 
\begin{figure}[t]
\begin{minipage}{5in}
  \centering
  \raisebox{-0.5\height}{\includegraphics[scale=0.4]{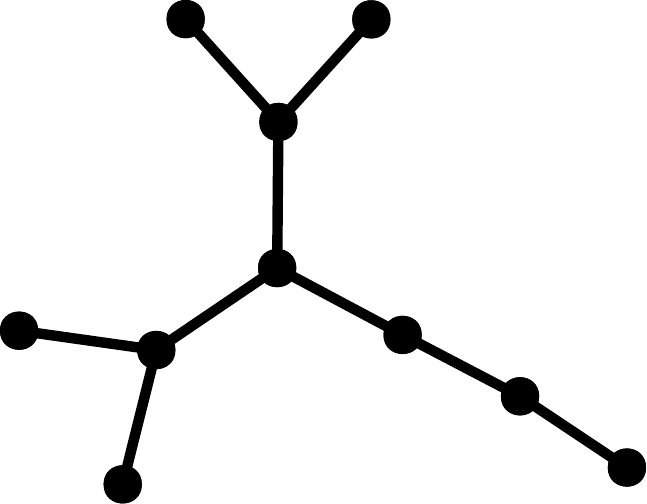}}
  \hspace*{.4in}
  \raisebox{-0.5\height}{\includegraphics[scale=0.4]{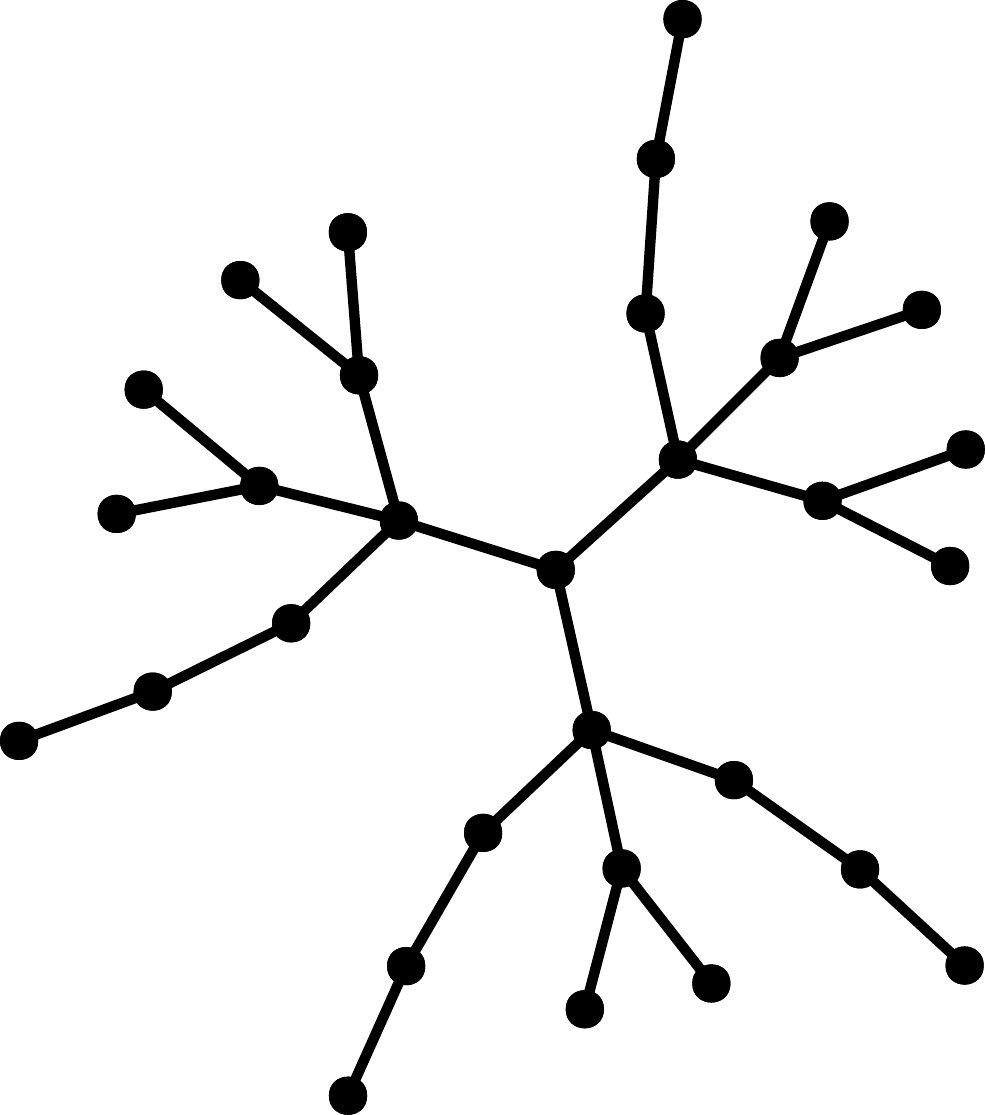}}
\end{minipage}
\caption{The graph on the left is $B(1,2)$, the graph on the right is $T_3(1\ 1\ 2) = T(1\ 1\ 2, 2\ 2\ 1 )$}
\label{fig:B}
\end{figure}

Given two non-negative integers $p$ and $s$, the tree $B(p,s)$ is the tree formed by taking  $p$ disjoint copies of a $4$-path and $s$ disjoint copies of a $4$-star, and then 
identifying one leaf-vertex of each copy into a common vertex $v$. The vertex $v$ is considered as the root of $B(p,s)$. Note that $B(0,0)$ consists of an isolated vertex $v$ (See Figure \ref{fig:B} for an example). 

Next, given two sequences $p=(p_1,\ldots,p_n)$  and $s=(s_1,\ldots,s_n)$ of non-negative integers with length $n\geq 2$, the tree $T(p,s)$ is constructed as follows.  Take  the disjoint union of $B(p_i,s_i)$ for all $1\leq i\leq n$ and denote their respective roots by $v_i$. Next, join each vertex $v_i$ to a central vertex $c$. This yields $T(p,s)$. The  subgraph induced on the vertices $\{c,v_1,\ldots,v_n\}$ will be referred to as the \emph{core} of $T(p,s)$. It is easy to see that the core is isomorphic to a $(n+1)$-star (See also Figure \ref{fig:B} for an example). 

\begin{figure}[t]
\centerline{\includegraphics[scale=0.4]{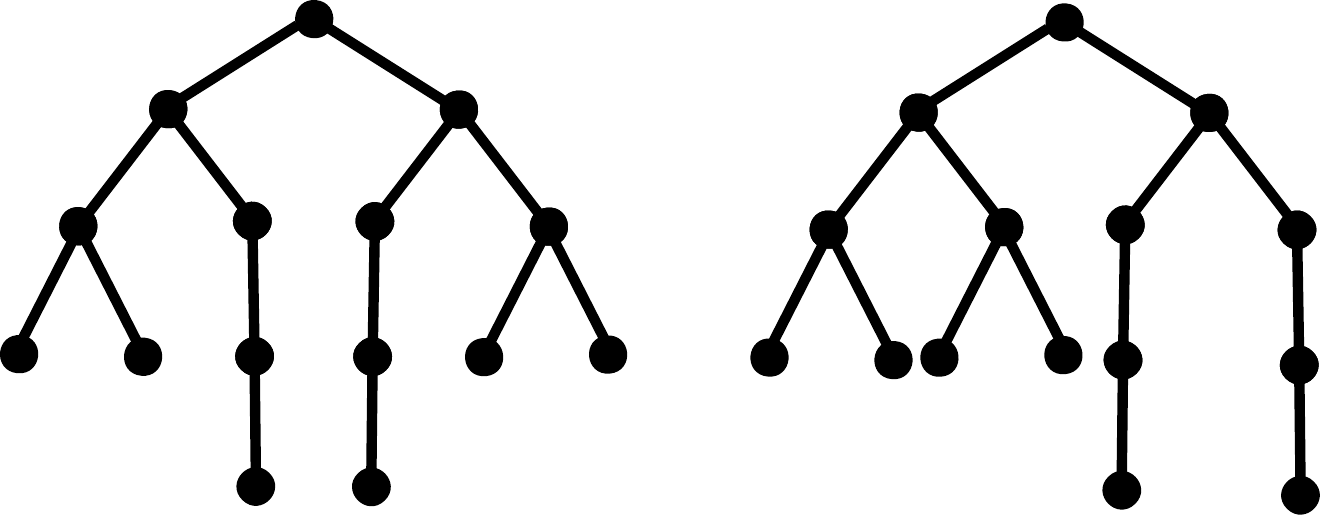}}
\caption{The graphs $T_2(1\ 1)$ and $T_2(2\ 0)$ have the same $U_2$-polynomial but distinct $U_3$-polynomial. Note
that $1+1 = 2+0$ but $1^2+1^2\neq 2^2+0^2$.}
\label{fig:example}
\end{figure}

Finally, we consider a special case of the last construction. Given a positive integer $\alpha$, we say that a sequence of non-negative integers $p =(p_1,\ldots, p_n)$ is $\alpha$-compatible if $n\geq 2$ and $\max_i p_i \leq \alpha$. If $p$ is an $\alpha$-compatible
sequence, then $\alpha - p$ denotes the sequence $(\alpha-p_1,\ldots, \alpha-p_n)$. Define $T_\alpha(p) := T(p,\alpha-p)$. We say that a tree $T$ is a \emph{Prouhet-Tarry-Escott tree} (for short PTE-tree) if there exist $\alpha$ and $p$ such that $T$ is isomorphic to $T_\alpha(p)$.

Our first result, to be proved in  Section~\ref{sec:sameUk},  is the following:
\begin{theorem}
\label{thm:PTEencodes}
Two non-isomorphic $PTE$-trees have the same $U_{k+1}$-polynomial if and only if their associated 
sequences are solutions for the PTE-problem with degree $k$. 
\end{theorem}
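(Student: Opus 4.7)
The plan is to set up a generating-function bookkeeping at each vertex $v_i$ that isolates the dependence of $U_{k+1}(T_\alpha(p))$ on the sequence $p=(p_1,\ldots,p_n)$. To each pendant block (either a $4$-path or a $4$-star) attached at $v_i$ I associate a local polynomial $\tilde g_P$, resp.\ $\tilde g_S$, in variables $\mathbf{x}$, $y$, $z$ that enumerates its $2^3$ edge subsets $A$ by the induced vertex partition, marking the component containing the attaching vertex by a power of $y$ tracking its size and marking $|A|$ by a power of $z$. A direct inspection of the eight cases yields the key identity
\[
\tilde g_P - \tilde g_S \;=\; yz\,(x_2 - y\,x_1),
\]
which carries an explicit factor of $z$. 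Writing $\epsilon = yz(x_2 - yx_1)$, the branch at $v_i$ contributes a factor $B_i = y(\tilde g_S + \epsilon)^{p_i}\tilde g_S^{\alpha-p_i}$; assembling the branches at the center $c$ (by multiplying the per-branch factors $z[B_i]_x + B_i$, where $[\cdot]_x$ denotes the substitution $y^k \mapsto x_k$, and finally applying the same substitution to the variable $y$ marking $c$'s component) gives an explicit expression for $U_{k+1}(T_\alpha(p))$ modulo $z^{k+2}$.

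For the forward direction I expand the branch factor as $B_i = y\sum_{j\geq 0} \binom{p_i}{j}\epsilon^j\tilde g_S^{\alpha-j}$. Since $\epsilon^j$ carries a factor of $z^j$, the coefficient of $p_i^\ell$ in $B_i$ has $z$-order at least $\ell$, so multiplying the $n$ branch factors and truncating modulo $z^{k+2}$ yields a symmetric polynomial in $(p_1,\ldots,p_n)$ of total degree at most $k+1$. Its degree-$(k+1)$ homogeneous piece arises solely from the leading-$z^{k+1}$ contributions of each factor, and a short multinomial computation using
\[
\sum_{\ell_1+\cdots+\ell_n=k+1}\frac{\prod_i p_i^{\ell_i}}{\prod_i\ell_i!}\;=\;\frac{(p_1+\cdots+p_n)^{k+1}}{(k+1)!}
\]
shows that these terms collapse to a fixed multiple of $(\sum_i p_i)^{k+1}$. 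The homogeneous parts of degree $\leq k$, being symmetric polynomials in $(p_1,\ldots,p_n)$ of degree $\leq k$, are polynomials in the power sums $\sum_i p_i^\ell$ for $\ell\leq k$; hence the PTE identities of degree $k$ force $U_{k+1}(T_\alpha(p)) = U_{k+1}(T_\alpha(q))$.

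For the converse I read off the power sums directly from specific $\mathbf{x}$-monomial coefficients. Letting $N = 1 + n + 3n\alpha$ denote the number of vertices, consider the coefficient of $x_{N-1-3\alpha}\,x_{1+3\alpha-2j}\,x_2^j$ in $U_{k+1}(T_\alpha(p))$ for $1\leq j\leq k$. A case analysis on the edge subsets $A$ with $|A|\leq k+1$ producing the partition $\{N-1-3\alpha,\,1+3\alpha-2j,\,2^j\}$ shows that the component of size $N-1-3\alpha$ forces exactly one spoke $cv_i$ to be cut and no further edges to be cut outside branch $i$, while the remaining $j$ cuts must each be the middle edge $u_1u_2$ of a distinct path sub-block at $v_i$ (the unique way to detach a $2$-vertex component with a single edge cut); the coefficient therefore equals $\sum_i \binom{p_i}{j}$. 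The values $\sum_i \binom{p_i}{j}$ for $j=1,\ldots,k$ are related to the power sums $\sum_i p_i^\ell$, $\ell=1,\ldots,k$, by an invertible lower-triangular transformation involving Stirling numbers of the first kind, so their matching is equivalent to PTE degree $k$.

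The main technical obstacle is the combinatorial uniqueness argument in the converse --- ruling out that any other configuration of at most $k+1$ cuts produces the partition $\{N-1-3\alpha,\,1+3\alpha-2j,\,2^j\}$. Subsidiary but routine tasks are to verify that $n$, $\alpha$, and hence $N$ are themselves recoverable from $U_{k+1}$ (from the $A=\emptyset$ term and the $|A|=1$ coefficient marking spoke cuts), so that the comparison really takes place between trees with the same core parameters, and to handle the small values of $\alpha$ where the three distinguished parts collide (in which case the sequence $p$ is already so constrained by $\alpha$-compatibility that PTE degree $k$ forces $p$ and $q$ to coincide as multisets).
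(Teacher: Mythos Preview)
The proposal is correct, and both directions are sound, but your forward direction takes a genuinely different route from the paper.

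For the forward implication, the paper proceeds combinatorially: it shows (Lemma~\ref{lemma:sym_poly}) that the number of subtrees of $T_\alpha(p)$ of a fixed type $(q,t)$ is a symmetric polynomial of degree $\sum_i(q_i+t_i)$ in the $p_i$, so when $p=_k p'$ these subtree counts agree for all types with $\sum_i(q_i+t_i)\leq k$ (Corollary~\ref{cor:samesubtrees}). It then builds an explicit partition-preserving bijection between edge subsets of size $\leq k+1$ by associating to each subset its smallest containing typed subtree and transporting along a type-preserving bijection $\Phi$. Your generating-function argument bypasses the bijection entirely: the identity $\tilde g_P-\tilde g_S=\epsilon$ with $\epsilon\equiv 0\pmod z$ shows directly that $U_{k+1}(T_\alpha(p))$ is, coefficient by coefficient, a symmetric polynomial of degree $\leq k+1$ in $p$ whose top homogeneous part depends only on $\sum_i p_i$. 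Your approach is more algebraic and makes the mechanism (the single factor of $z$ in $\epsilon$) completely transparent; the paper's bijection is more combinatorial and yields an explicit matching of edge subsets rather than just equality of polynomials.

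For the converse, you and the paper do essentially the same thing: read off $\sum_i\binom{p_i}{j}$ as the coefficient of $x_{N-1-3\alpha}\,x_{1+3\alpha-2j}\,x_2^{\,j}$. The paper packages this as Proposition~\ref{prop:notsameU} (``$p=_k p'$ but $p\neq_{k+1}p'$ implies $U_{k+2}$ differ'') and observes that the hypothesis $p=_k p'$ with $p\neq p'$ already forces $k+1\leq\alpha$ via the factor $(x-1)^{k+1}\mid\sum_i(x^{p_i}-x^{p'_i})$; this bound is exactly what guarantees $1+3\alpha-2j\geq\alpha+3>3$, so the three subscripts are distinct and the uniqueness case-analysis is clean. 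You extract all $j\leq k$ at once and defer the collision issues. Your ``small $\alpha$'' caveat is resolved by the same observation: if $k\geq\alpha$, your argument for $j\leq\alpha-1$ already gives $p=_{\alpha-1}p'$, and since $p_i\in\{0,\ldots,\alpha\}$ one more power sum forces $p=p'$ as multisets, making the non-isomorphic hypothesis vacuous.
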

A simple example of application of Theorem \ref{thm:PTEencodes} can be seen in Figure \ref{fig:example}. To construct our examples, by Theorem \ref{thm:PTEencodes}, we
need to find solutions for the PTE-problem for any degree. As noted
in the introduction, the existence of such solutions  was already known to Prouhet. In \cite{wright1948equal}, Wright proved the following stronger result:
\begin{theorem}
\label{thm:Prouhet}
For $k\geq 1$, $j\geq 2$ there exist sequences $a_1,a_2,a_3,\ldots,a_j$ of length $n\leq (k^2+k+2)/2$, distinct up to permutation,
such that
\[a_1 = _k a_2 =_k  \cdots =_k a_j.\]
\end{theorem}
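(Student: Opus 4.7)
The plan is to proceed by induction on $k$, with an inductive hypothesis that packages the length bound and the chain length simultaneously. Write $n_k=(k^2+k+2)/2$ so that $n_k-n_{k-1}=k$; this tells us each induction step should add at most $k+1$ new terms. For the base case $k=1$ and any $j\ge 2$, the sequences $(i-1,\,N-i+1)$, $i=1,\ldots,j$, have common sum $N$ and are distinct up to permutation once $N>2(j-1)$, realizing $n=2=n_1$.

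For the inductive step $k\to k+1$, the workhorse is the identity
\[a \cup (b+c) \;=_{k+1}\; b \cup (a+c) \qquad (c\in\mathbb{Z}),\]
valid whenever $a=_k b$: a direct binomial expansion together with $a=_k b$ shows that the $\ell$-th power sums of the two sides agree for every $\ell\le k+1$. Iterated mechanically this doubles the length at each step and gives only $n\le 2^k$, which is far weaker than the claimed bound. Wright's improvement replaces the doubling by a finer merge using the finite-difference identity
\[\sum_{i=0}^{k+1}(-1)^i\binom{k+1}{i}(x+i)^\ell=0 \qquad (0\le\ell\le k),\]
which encodes a signed PTE-relation supported on only $k+2$ distinct points. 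The binomial multiplicities $\binom{k+1}{i}>1$ can be ``flattened'' by embedding copies of lower-degree PTE-solutions already provided by the inductive hypothesis, so that at each stage only $k+1$ genuinely new terms are added to the sequence. Summing this increment over the induction gives exactly the bound $n_k$.

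To promote from $j=2$ to arbitrary $j$, observe that each inductive step carries a free integer parameter $c$. Unrolling the recursion, the constructed sequence can be described as the root-multiset of a monic polynomial $F(x;c_1,\ldots,c_k)\in\mathbb{Z}[x]$ of degree $n_k$ whose top $k+1$ coefficients do not depend on $c_1,\ldots,c_k$. Consequently, distinct integer specializations of $(c_1,\ldots,c_k)$ produce sequences that share their first $k$ elementary symmetric polynomials and hence their first $k$ power sums; selecting $j$ specializations yielding pairwise-distinct multisets gives the required $j$-chain.

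The main obstacle is the tightness $n=O(k^2)$ rather than the naive $n=O(2^k)$: this requires the bookkeeping that absorbs binomial multiplicities via the finite-difference identity into previously-built pieces to be orchestrated very carefully. Equally delicate is verifying that the parameters introduced at each induction stage remain genuinely independent, so that arbitrarily many distinct integer specializations really do exist, and that the chain length $j$ does not force the length bound to deteriorate. This dimension count is the essence of Wright's argument.
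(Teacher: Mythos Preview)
The paper does not prove this theorem at all: it is quoted verbatim from Wright~\cite{wright1948equal} and used as a black box to deduce the corollary on families of trees with the same $U_k$-polynomial. There is therefore no proof in the paper to compare your proposal against.

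Judged on its own merits, your proposal is an outline rather than a proof. The base case and the Tarry-type identity $a\cup(b+c)=_{k+1}b\cup(a+c)$ whenever $a=_k b$ are both correct, and you rightly note that naive iteration only yields $n\le 2^k$. But the decisive step---how the finite-difference identity is merged with previously-built solutions so that each stage adds only $k+1$ new terms---is never actually performed; you call it ``flattening'' the binomial multiplicities and then, in your final paragraph, explicitly concede that this bookkeeping is ``the main obstacle'' and must be ``orchestrated very carefully.'' Likewise, the promotion from $j=2$ to arbitrary $j$ rests on the unproved assertion that the top $k+1$ coefficients of a polynomial $F(x;c_1,\ldots,c_k)$ are parameter-free and that sufficiently many distinct integer specializations exist. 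Both of these are exactly the substantive content of Wright's argument, and you have stated them as desiderata rather than established them. What you have written would serve as a reasonable roadmap for reading Wright's paper, but it is not itself a proof.
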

The latter theorem allows us to obtain sets of any given cardinality of non-isomorphic trees with same
$U_k$-polynomial.
\begin{corollary}
For every pair of positive integers $k$ and  $j$, there is a set of $j$ non-isomorphic trees with the same $U_k$-polynomial. 
\end{corollary}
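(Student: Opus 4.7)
The plan is to combine Wright's theorem (Theorem \ref{thm:Prouhet}) with the characterization in Theorem \ref{thm:PTEencodes}. The case $j=1$ is trivial, so I focus on $j\geq 2$. First I would apply Theorem \ref{thm:Prouhet} with degree parameter $k$ and multiplicity $j$, obtaining integer sequences $a_1,\ldots,a_j$ of common length $n$, pairwise distinct up to permutation, satisfying $a_1=_k a_2=_k \cdots=_k a_j$. Using the affine invariance of PTE solutions noted in the introduction, I would translate all the sequences by the same integer to make every entry non-negative, and then set $\alpha$ to be the maximum of all entries across all sequences; each $a_i$ is then $\alpha$-compatible.

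I would then form the PTE-trees $T_\alpha(a_1),\ldots,T_\alpha(a_j)$. These are pairwise non-isomorphic, because in $T_\alpha(a)$ the branch $B(a_i,\alpha-a_i)$ attached to the center $c$ is determined by the single integer $a_i$ (since $\alpha$ is fixed), so any isomorphism between two such trees would force equality of the underlying multisets of branches and hence equality of the sequences up to permutation. By Theorem \ref{thm:PTEencodes} applied to each pair, all these trees share the same $U_{k+1}$-polynomial.

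Finally, I would observe that equality of $U_{k+1}$-polynomials implies equality of $U_k$-polynomials: in a tree, removing $|A|$ edges yields exactly $|A|+1$ components, so the monomial $\mathbf{x}_{\lambda(E\setminus A)}$ has total degree $|A|+1$, and $U_k$ is recovered from $U_{k+1}$ by keeping only monomials of total degree at most $k+1$. I do not foresee any serious obstacle: the main content sits inside Theorems \ref{thm:Prouhet} and \ref{thm:PTEencodes}, and only the routine shifting/$\alpha$-compatibility step needs to be verified.
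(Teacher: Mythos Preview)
Your proposal is correct and matches the paper's intended argument: the corollary is stated without proof immediately after Theorems~\ref{thm:PTEencodes} and~\ref{thm:Prouhet}, and the combination you describe---invoke Wright's theorem, shift to obtain $\alpha$-compatible sequences, form the PTE-trees, and apply Theorem~\ref{thm:PTEencodes}---is precisely what the paper has in mind. The extra details you supply (the affine shift, the non-isomorphism argument via the centroid and the branch multisets, and the observation that $U_{k+1}$ determines $U_k$ by degree) are routine and sound.
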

It is well-known that if two sequences of length $n$ satisfy $p=_kp'$, then $n>k$ (see \cite[Prop. 2]{borwein1994prouhet}). Thus, 
as another consequence of Theorem \ref{thm:PTEencodes} we get 
\begin{corollary}
Let $T$ and $T'$ be two PTE-trees. Then $T$ and $T'$ have the same $U$-polynomial if and only if they are isomorphic. 
\end{corollary}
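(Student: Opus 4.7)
The ``if'' direction is immediate, since the $U$-polynomial is an isomorphism invariant. For the converse I plan to argue by contradiction: assume $T$ and $T'$ are non-isomorphic PTE-trees with $U(T)=U(T')$, and extract from this hypothesis a PTE solution whose size violates the classical lower bound.

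The first step is to observe that $U(T)=U(T')$ forces $U_{k}(T)=U_{k}(T')$ for every positive integer $k$. Because $T$ is a tree, every $A\subseteq E(T)$ satisfies $r(A)=|A|$, so $E\setminus A$ splits into exactly $|A|+1$ connected components; hence the monomial $\mathbf x_{\lambda(E\setminus A)}$ contributed by $A$ to $U(T)$ corresponds to a partition with exactly $|A|+1$ parts. Consequently $U_k(T)$ can be recovered from $U(T)$ by keeping only those monomials $\mathbf x_\lambda$ whose partition has at most $k+1$ parts, an operation depending solely on $U(T)$, which gives $U_{k+1}(T)=U_{k+1}(T')$ for every $k\geq 1$.

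I would then invoke Theorem~\ref{thm:PTEencodes} for each such $k$. Writing $T=T_\alpha(p)$ and $T'=T_{\alpha'}(p')$, the non-isomorphism assumption together with $U_{k+1}(T)=U_{k+1}(T')$ forces the sequences $p$ and $p'$ to form a PTE solution $p=_k p'$ of degree $k$; in particular they share a common length $n$ (otherwise the PTE relation is not even defined), and they are distinct up to permutation (otherwise $T\cong T'$). Feeding this into the bound $n>k$ valid for any such non-trivial PTE solution \cite{borwein1994prouhet} and letting $k$ grow without bound produces the desired contradiction, since $n$ is fixed.

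The only real obstacle I anticipate is a bookkeeping one: one must verify that the encoding $T=T_\alpha(p)$ is canonical enough that $p$ and $p'$ really are comparable as sequences of the same length $n$. This is built into the ``if and only if'' in Theorem~\ref{thm:PTEencodes}, but if one wishes to see it directly it is enough to note that $n$ equals the number of leaves of the internal star in the core of $T$, and this number is readable from the degree sequence of $T$, which is in turn encoded in $U(T)$. Once this matching is in place, the three steps above close the argument.
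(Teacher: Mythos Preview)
Your proposal is correct and follows essentially the same route as the paper: both deduce from $U(T)=U(T')$ that $p=_k p'$ for every $k$ via Theorem~\ref{thm:PTEencodes}, and then invoke the classical bound $n>k$ for nontrivial PTE solutions to reach a contradiction. Your write-up is in fact more careful than the paper's one-line derivation, since you spell out why $U_k$ can be read off from $U$ and you explicitly flag the bookkeeping needed to ensure $p$ and $p'$ have the same length.
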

By the last corollary, it is natural to ask whether the $U$-polynomial distinguishes PTE-trees from non-PTE trees. This is indeed the case.
\begin{proposition}
\label{prop:recog}
The $U_1$-polynomial recognizes whether a tree is a PTE-tree or not. 
\end{proposition}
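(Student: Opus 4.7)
The plan is to read enough structural information about $T$ off $U_1(T)$ to decide PTE-ness. Since $T$ is a tree,
\[U_1(T) = x_{|V|} + \sum_{e \in E(T)} x_{a(e)}\, x_{|V|-a(e)},\]
where $a(e)$ is the size of one component of $T \setminus e$. Hence $U_1(T)$ encodes $|V|$ together with the multiset of unordered pairs $\{a(e), |V|-a(e)\}$ of component sizes.

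First I compute $U_1(T_\alpha(p))$ by classifying edges into four types: pendant edges (smaller side $1$) occurring $2n\alpha-P$ times; middle edges of attached $4$-paths (smaller side $2$) occurring $P$ times; edges attaching a $4$-path or a $4$-star to a branch root (smaller side $3$) occurring $n\alpha$ times; and the $n$ core edges (smaller side $1+3\alpha$). Here $P = \sum p_i$ and $|V| = 1 + n(1+3\alpha)$. A short check shows that for $\alpha \geq 1$ and $n \geq 2$ the four smaller-side values are pairwise distinct, yielding a characteristic form of $U_1(T_\alpha(p))$. The degenerate case $\alpha = 0$ is immediate: $T_0(p)$ is an $(n+1)$-star, and a tree has all edges pendant if and only if it is a star.

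For the converse, suppose $U_1(T)$ has the characteristic PTE form. Focus on the $n$ edges whose smaller side has size $1+3\alpha$. By a standard fact about trees, the small sides of two edges are always disjoint or nested, and equal-sized small sides cannot be strictly nested; hence these $n$ small sides are pairwise disjoint subtrees $s_1, \ldots, s_n$ whose union covers all but a single vertex $c$. Counting edges shows that the $n$ distinguished edges are exactly the edges with one endpoint in some $s_i$ and the other outside. Contracting each $s_i$ to a point yields a tree $T'$ on $n+1$ vertices; I then show $T'$ is a star centered at $c$, since any edge of $T'$ not incident to $c$, or any edge joining $c$ to a non-leaf $\tilde{s}_i$, would correspond to an edge of $T$ whose bipartition has sides of sizes not of the form $\{1+3\alpha, *\}$, contradicting the hypothesis.

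Now consider each $s_i$ as rooted at its neighbor $v_i$ of $c$. Since every edge of $T$ has smaller side in $\{1,2,3,1+3\alpha\}$ and the smaller side of any edge inside $s_i$ has size at most $3\alpha < 1+3\alpha$, every descendant subtree of $s_i$ has size at most $3$. Therefore any vertex heading a size-$3$ descendant subtree must be a child of $v_i$, and there are at most $\alpha$ such children in each $s_i$. Summing, there are at most $n\alpha$ size-$3$ descendant subtrees across all $s_i$; matching the global multiplicity forces equality, so $v_i$ has exactly $\alpha$ children, each heading a size-$3$ subtree. Each such rooted subtree is either a $P_3$ (an attached $4$-path) or a $K_{1,2}$ (an attached $4$-star), so $s_i \cong B(q_i, \alpha-q_i)$ and $T \cong T_\alpha(q)$, a PTE-tree. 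The main obstacle is the star-shape argument for $T'$, which requires careful bookkeeping of bipartition sizes under contraction.
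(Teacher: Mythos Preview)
Your argument is correct and follows essentially the same route as the paper's: both proofs first compute that the multiset of smaller-side sizes of a PTE-tree $T_\alpha(p)$ is $\{1^{2n\alpha-P},2^{P},3^{n\alpha},(3\alpha+1)^n\}$, and then reconstruct any tree with that $U_1$-polynomial level by level from the top down. The paper phrases this via the centroid and the Orellana--Scott lemma (labels decrease along paths from the centroid; the label of an edge equals one plus the sum of its children's labels), whereas you phrase it via the laminar family of ``small sides''. These are equivalent: the small side of an edge is exactly the subtree hanging below it when the tree is rooted at its centroid, and your observation that equal-sized small sides must be disjoint is precisely the paper's ``edges with the same label repel''. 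Your treatment is a bit more self-contained; the paper's is shorter because it cites the lemma.

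One small point: your justification that the contracted tree $T'$ is a star at $c$ is more complicated than necessary, and the stated reason (that a bad edge would have a bipartition not of the form $\{1+3\alpha,*\}$) is not quite the right one---every edge of $T'$ \emph{is} one of the $e_i$ and hence \emph{does} have that bipartition. The clean argument is already implicit in what you wrote: since $s_i$ is a component of $T\setminus e_i$, the edge $e_i$ is the \emph{unique} edge of $T$ with exactly one endpoint in $s_i$; hence $\tilde s_i$ has degree~$1$ in $T'$, and a tree on $n+1$ vertices with $n$ leaves is a star. With that fix the proof is complete and matches the paper's.
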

The proof of this proposition uses some techniques from \cite{orellana2014graphs} and it is given in Section \ref{sec:recognizing}. As a direct corollary of this proposition
and Theorem \ref{thm:PTEencodes} we get that all PTE-trees are distinguished up to isomorphism by the $U$-polynomial:
\begin{corollary}
If $T$ is a PTE-tree, and $T'$ is another tree such that $U(T) = U(T')$, then $T'$ is isomorphic to $T$. 
\end{corollary}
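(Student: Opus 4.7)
The plan is to chain Proposition~\ref{prop:recog} together with the preceding corollary, which asserts that two PTE-trees with equal $U$-polynomial are isomorphic. The first step is to observe that the hypothesis $U(T)=U(T')$ automatically forces $U_1(T)=U_1(T')$. This is essentially bookkeeping: in the expression $U(T)=\sum_{A\subseteq E}\mathbf{x}_{\lambda(E\setminus A)}$, each summand coming from $A$ with $|A|\leq 1$ contributes a monomial that is a product of at most $1+|A|\leq 2$ indeterminates (since, for trees, the number of parts of $\lambda(E\setminus A)$ equals $1+|A|$). Consequently, $U_1$ can be extracted from $U$ simply by keeping those monomials whose underlying partition has length at most two, so the equality of full $U$-polynomials transfers to the $U_1$-polynomials.

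The second step is a direct application of Proposition~\ref{prop:recog}. Since the $U_1$-polynomial recognizes whether a tree is a PTE-tree, and $T$ is assumed to be a PTE-tree, the identity $U_1(T)=U_1(T')$ forces $T'$ to also be a PTE-tree.

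With both $T$ and $T'$ now known to be PTE-trees sharing the same full $U$-polynomial, I would close the argument by citing the preceding corollary, which gives isomorphism. No step poses a real obstacle; the only point that deserves explicit justification is the opening observation that $U_1$ is a syntactic projection of $U$, and this follows immediately from the definitions as described above.
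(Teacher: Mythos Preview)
Your proposal is correct and follows essentially the same route as the paper: the corollary is stated there as an immediate consequence of Proposition~\ref{prop:recog} (together with the preceding corollary, itself derived from Theorem~\ref{thm:PTEencodes}), and your write-up simply fills in the easy details, including the observation that $U_1$ is recoverable from $U$ by restricting to monomials whose partition has at most two parts.
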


\section{Solutions to the PTE problem of degree $k$  induce trees with same $U_{k+1}$-polynomial}\label{sec:sameUk}

In this section, we will prove Theorem \ref{thm:PTEencodes}.
Let $T$ be a tree and $q$ and $t$ two non-negative sequences of the same length. If $S$ is a subtree of $T$ isomorphic to $T(q,t)$, we say that $S$ is of type $(q,t)$. We denote by $S_{q,t}(T)$ the set of all subtrees of $T$ of type $(q,t)$. If $T$ is a PTE-tree associated with a sequence of length $n$, then sequences $q,t$ will also assumed to be of length $n$. 

\begin{lemma}\label{lemma:sym_poly}
Let $\alpha$ be a non-negative integer and  $q,t$ be two non-negative integer sequences of length $n$. Then, there is a symmetric polynomial $P_{\alpha,q,t}(x_1,x_2,\ldots,x_n)$ of
degree at most $\sum_i(q_i+t_i)$ such that, for every $\alpha$-compatible sequence $p$ of length $n$, 
we have 
\[ | S_{q,t}(T_\alpha(p))| = P_{\alpha,q,t}(p_1,p_2,\ldots,p_n).\]
\end{lemma}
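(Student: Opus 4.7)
The plan is to derive an explicit formula for $|S_{q,t}(T_\alpha(p))|$ as a sum over permutations $\sigma\in S_n$ of products of binomial coefficients in the entries of $p$; from this formula, polynomiality, symmetry and the degree bound will be immediate.

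First, I would observe that $T_\alpha(p)$ depends on $p$ only through the underlying multiset $\{p_1,\ldots,p_n\}$, so any permutation of the entries of $p$ yields an isomorphic tree. In particular $|S_{q,t}(T_\alpha(p))|$ is automatically a symmetric function of $(p_1,\ldots,p_n)$, regardless of any further analysis.

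Next, I would analyse the shape of a subtree $S$ of $T_\alpha(p)$ isomorphic to $T(q,t)$. The central vertex $c'$ of $T(q,t)$ must be identified with $c$, and its neighbours $v'_1,\ldots,v'_n$ with $v_1,\ldots,v_n$ in some order determined by a permutation $\sigma\in S_n$. Once $\sigma$ is chosen, specifying $S$ amounts to selecting, at each branch $B(p_j,\alpha-p_j)$, which $q_{\sigma(j)}$ of the $p_j$ available $4$-paths and which $t_{\sigma(j)}$ of the $\alpha-p_j$ available $4$-stars to keep. Accounting for the overcounting by the stabiliser of the multiset $\{(q_i,t_i):1\le i\le n\}$, this yields
\[
|S_{q,t}(T_\alpha(p))|=\frac{1}{\prod_{(a,b)}m_{(a,b)}!}\,\sum_{\sigma\in S_n}\,\prod_{j=1}^n\binom{p_j}{q_{\sigma(j)}}\binom{\alpha-p_j}{t_{\sigma(j)}},
\]
where $m_{(a,b)}$ denotes the multiplicity of $(a,b)$ in $\{(q_i,t_i):1\le i\le n\}$. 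The desired conclusions follow at once: each factor $\binom{p_j}{q_{\sigma(j)}}\binom{\alpha-p_j}{t_{\sigma(j)}}$ is a polynomial in $p_j$ of degree $q_{\sigma(j)}+t_{\sigma(j)}$, so the total expression is a polynomial of degree at most $\sum_i(q_i+t_i)$; symmetry in $(p_1,\ldots,p_n)$ is manifest since we sum over all $\sigma\in S_n$.

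The step I expect to be the main obstacle is the rigidity claim that every subtree of $T_\alpha(p)$ of type $(q,t)$ aligns its core with the core of $T_\alpha(p)$. This is easy when each $q_i+t_i\ge 1$, since then every hub of $T(q,t)$ has degree at least $2$ and one can check that only $c$ can serve as the image of $c'$. In the remaining degenerate cases one argues by a short case analysis on the possible positions of $c'$ in $T_\alpha(p)$ (the centre $c$, a hub $v_j$, the centre of some $4$-star, or an interior vertex of some $4$-path), using the local structure of these branches to show that each non-central placement contributes a polynomial of strictly smaller total degree in the $p_j$, so the overall degree bound is preserved and the total sum is forced to be symmetric by the first observation.
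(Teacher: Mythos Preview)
Your derivation is essentially the paper's: both arrive at
\[
P_{\alpha,q,t}(x_1,\ldots,x_n)=\frac{1}{N_{q,t}}\sum_{\pi\in S_n}\prod_{i=1}^n\binom{x_i}{q_{\pi(i)}}\binom{\alpha-x_i}{t_{\pi(i)}},
\]
with $N_{q,t}=\prod_{(a,b)}m_{(a,b)}!$ the order of the stabiliser of the sequence of pairs $(q_i,t_i)$, and read off symmetry and the degree bound directly from this expression. The paper dispatches the alignment claim---that every $S\in S_{q,t}(T_\alpha(p))$ has its core equal to the core of $T_\alpha(p)$---with the single word ``Clearly''; you are right to flag it as the substantive point.

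Your proposed patch for the degenerate cases, however, does not work: it is not true that non-aligned copies contribute only lower-degree terms. Take $n=2$ and $(q,t)=((1,0),(0,0))$, so that $T(q,t)$ is a $6$-path and $\sum_i(q_i+t_i)=1$. For $\alpha=2$ a direct count gives that the total number of $6$-paths in $T_\alpha(p)$ equals $16+2(p_1+p_2)-(p_1^2+p_2^2)$, a polynomial of degree~$2$; thus the non-aligned contribution has \emph{higher} degree than the aligned one, and under the literal reading ``subtree isomorphic to $T(q,t)$'' the lemma itself fails here---no case analysis will repair it. The reading that makes the lemma (and its downstream uses) correct is that $S_{q,t}(T_\alpha(p))$ consists only of those subtrees \emph{containing the core} of $T_\alpha(p)$; this is what is implicitly needed later, where intersections of members of $\mathcal{S}_T$ must remain in $\mathcal{S}_T$ and where $S_{\omega,T}$ must be the weighted core. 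With that convention alignment holds by definition, your formula and the paper's agree, and nothing further is required.
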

\begin{proof}

Let $\pi$ be a permutation of $[n]$. Let $S^{\pi}_{q,t}(T_{\alpha}(p))$ be the collection of  subtrees of $T_{\alpha}(p)$ of type $(q,t)$ such that, for all $1\leq i\leq n$,  vertex $v_i$ is the root of a subtree isomorphic to $B(q_{\pi(i)},t_{\pi(i)})$. Clearly 
\[S_{q,t}(T_\alpha(p))=\bigcup_{\pi\in S_n} S^{\pi}_{q,t}(T_{\alpha}(p)), \quad 
| S^{\pi}_{q,t}(T_{\alpha}(p))|=\prod_{i=1}^n\binom{p_i}{q_{\pi(i)}}\binom{\alpha-p_i}{t_{\pi(i)}}.\]
Notice also that $S^{\pi}_{q,t}(T_{\alpha}(p))=S^{\pi'}_{q,t}(T_{\alpha}(p))$ only when $q_{\pi(i)}=q_{\pi'(i)}$ and $t_{\pi(i)}=t_{\pi'(i)}$ for all $1\leq i\leq n$. With $\pi$ fixed, the number of such permutations $\pi'$ depends only on the symmetries of $(q,t)$. More concretely, it equals the number of permutations $\sigma\in S_n$ such that $q_{\sigma(i)}=q_{i}$ and $t_{\sigma(i)}=t_{i}$ for all $1\leq i\leq n$. Let us denote this number by $N_{q,t}$.

Thus, let  
\begin{equation}\label{eq:subtrees}
P_{\alpha,q,t}(x_1,x_2,\ldots) = \frac{1}{N_{q,t}} \sum_{\pi \in S_n}  \prod_{i=1}^n {x_{i} \choose  q_{\pi(i)} }  {\alpha - x_{i} \choose  t_{\pi(i)}}.
\end{equation}
It is clear that $P_{\alpha,q,t}$ is symmetric of degree at most $\sum_i (q_i+t_i)$, and by the discussion above $P_{\alpha,q,t}(p_1,p_2,\ldots)$ is the required number of subtrees.
\end{proof}

\begin{corollary}\label{cor:samesubtrees}
Let $\alpha$ be a positive integer. Suppose that  $p$ and $p'$  are two $\alpha$-compatible sequences such that $p =_k p'$. Then,  for every  $({q},{t})$ such that  $\sum_i ( q_i +t_i) \leq k$  we have 
\[ | S_{q,t}(T_\alpha(p))| = | S_{q,t}(T_\alpha(p'))|.\]
\end{corollary}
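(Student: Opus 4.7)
The plan is to reduce the corollary to a statement about symmetric polynomials in power sums. By Lemma~\ref{lemma:sym_poly}, the quantity $|S_{q,t}(T_\alpha(p))|$ equals $P_{\alpha,q,t}(p_1,\ldots,p_n)$, where $P_{\alpha,q,t}$ is a symmetric polynomial in $x_1,\ldots,x_n$ of total degree at most $\sum_i(q_i+t_i)\leq k$. Since the analogous identity holds for $p'$, it suffices to show that $P_{\alpha,q,t}(p_1,\ldots,p_n)=P_{\alpha,q,t}(p_1',\ldots,p_n')$ under the hypothesis $p=_k p'$.

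The first step is to invoke the classical fact that the ring of symmetric polynomials in $n$ variables (with rational coefficients) is generated by the power sums $\pi_\ell(x_1,\ldots,x_n)=\sum_{i=1}^n x_i^\ell$ for $\ell\geq 1$; moreover, by a straightforward degree count (each $\pi_\ell$ has degree $\ell$), a symmetric polynomial of total degree at most $k$ can be expressed as a polynomial in $\pi_1,\ldots,\pi_k$ only. I would apply this to $P_{\alpha,q,t}$ to obtain a polynomial $Q\in\mathbb{Q}[y_1,\ldots,y_k]$ with
\[
P_{\alpha,q,t}(x_1,\ldots,x_n)=Q\bigl(\pi_1(x_1,\ldots,x_n),\ldots,\pi_k(x_1,\ldots,x_n)\bigr).
\]

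The second step is to use the hypothesis $p=_k p'$, which, by the very definition of solutions to the PTE problem in equation~\eqref{eq:PTE}, means exactly that $\pi_\ell(p_1,\ldots,p_n)=\pi_\ell(p_1',\ldots,p_n')$ for every $1\leq\ell\leq k$. Substituting into the expression above gives $P_{\alpha,q,t}(p)=P_{\alpha,q,t}(p')$, and the corollary follows.

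I do not expect any serious obstacle: the only point that deserves care is the degree bookkeeping that guarantees power sums beyond degree $k$ are not needed, which is why the hypothesis $\sum_i(q_i+t_i)\leq k$ is stated in precisely this form. An alternative presentation would go through elementary symmetric polynomials and Newton's identities, but invoking power sums directly is cleaner and makes the link with the PTE condition transparent.
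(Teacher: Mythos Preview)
Your argument is correct and essentially identical to the paper's own proof: both invoke Lemma~\ref{lemma:sym_poly} to get a symmetric polynomial of degree at most $k$, express it via power sums of degree at most $k$ (the paper cites \cite[Corollary 7.7.2]{stanley1999enumerative} for this), and then use the PTE hypothesis $p=_k p'$ to conclude.
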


\begin{proof}
By Lemma \ref{lemma:sym_poly}, there exists a symmetric polynomial $P(x_1,x_2,\ldots,x_n)$ of degree less or equal than $k$ such that 
$P(p_1,p_2,\ldots,p_n) = |S_{q,t}(T_\alpha(p))|$ and $P(p'_1,p'_2,\ldots,p'_n) = | S_{q,t}(T_\alpha(p'))|$. By \cite[Corollary 7.7.2]{stanley1999enumerative}, this polynomial can be written as a linear combination of the power sum symmetric polynomials of degree less or equal than $k$. Since $p =_k p'$, the conclusion follows.
\end{proof}

Let $S$ be a subtree of $T$.
Then, the contraction $S_{\omega,T}$ of $S$ in $T$ is the weighted
tree obtained by contracting all the edges not in $S$ and adding weights along contracted edges. If $F$ is a subset of edges of $T$, define 
\[U_F(T) = \sum_{A\subseteq F} \mathbf{x}_{\lambda(E\setminus A)}.\]
The proof of the lemma below follows directly from the definitions.
\begin{lemma}
\label{lemma:contracting}
Let $T=(V,E)$ be a tree and $S=(W,F)$ be a subtree of $T$. For every $A\subseteq F$, we have 
\begin{equation}\label{eq:weight}
\lambda_T(E\setminus A) = \lambda_{S_{\omega,T}}(F\setminus A).
\end{equation}
Moreover, $U_F(T) = W (S_{\omega,T})$.
\end{lemma}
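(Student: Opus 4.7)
The plan is to derive both statements from a single structural observation: because $A\subseteq F$, the edge sets $A$ and $E\setminus F$ are disjoint, so on $T$ the operations of contracting $E\setminus F$ and deleting $A$ commute and yield the same weighted graph regardless of order.

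First I would establish the partition identity (\ref{eq:weight}). The decomposition $E\setminus A=(E\setminus F)\cup(F\setminus A)$ shows that every edge contracted to form $S_{\omega,T}$ is still present in $T|_{E\setminus A}$. Contracting those edges inside $T|_{E\setminus A}$ therefore produces exactly $S_{\omega,T}|_{F\setminus A}$. Since contracting an edge merges two vertices of one connected component into a single weighted vertex and sums their weights, it preserves the multiset of component weight-totals. Because the vertices of $T$ all have weight $1$, these totals are plain vertex counts, so the partitions $\lambda_T(E\setminus A)$ and $\lambda_{S_{\omega,T}}(F\setminus A)$ coincide.

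For the second assertion, I would reindex the sum defining $U_F(T)$ via the involution $A\leftrightarrow F\setminus A$ on subsets of $F$ and then apply (\ref{eq:weight}) to obtain
\[U_F(T)=\sum_{A\subseteq F}\mathbf{x}_{\lambda_T(E\setminus A)}=\sum_{B\subseteq F}\mathbf{x}_{\lambda_{S_{\omega,T}}(B)}.\]
To identify this last expression with $W(S_{\omega,T};\mathbf{x},y)$, I would observe that $S_{\omega,T}$ is itself a tree (contracting edges of a tree produces a tree), so $r(B)=|B|$ for every $B\subseteq F$, and the factor $(y-1)^{|B|-r(B)}$ appearing in the states-model formula for $W$ collapses to $1$.

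I do not expect a real obstacle here, since the paper itself flags that the lemma is immediate from the definitions. The only point worth stating carefully is the commutation of contraction with deletion, which relies on the disjointness $A\cap(E\setminus F)=\emptyset$; this holds precisely because $A\subseteq F$, and it is what makes the two ways of building $S_{\omega,T}|_{F\setminus A}$ from $T$ agree.
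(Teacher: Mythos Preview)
Your argument is correct and is precisely the unpacking the paper has in mind: the paper gives no proof beyond noting that the lemma ``follows directly from the definitions,'' and what you have written is exactly that verification, with the key point being that $A\subseteq F$ makes $A$ and $E\setminus F$ disjoint so that deletion and contraction commute. One tiny stylistic quibble: in your second paragraph you say you reindex \emph{then} apply~(\ref{eq:weight}), but your displayed chain actually applies~(\ref{eq:weight}) first and then substitutes $B=F\setminus A$; either order works, so just make the prose match the display.
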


\begin{proposition}
\label{cor:core}
Let $T = T_\alpha(p)$ and $T' = T_\alpha(p')$ for two sequences $p$ and $p'$ such that $p=_1p'$. If $K$ and $K'$ denote, respectively, the core of $T$ and $T'$, then
\[U_K(T) = U_{K'}(T')\quad\text{and}\quad U_{E\setminus {K}}(T) = U_{E\setminus K'}(T').\]
\end{proposition}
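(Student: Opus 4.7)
The plan is to compute each side of the claimed equalities by contracting edges of $T$ and $T'$, so that both $U_K(T)$ and $U_{E\setminus K}(T)$ become the $W$-polynomial of a single weighted tree, and then to check that in each case the two resulting weighted trees are isomorphic.

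For $U_K(T)=U_{K'}(T')$ I apply Lemma~\ref{lemma:contracting} with $S=K$. Contracting all non-core edges collapses each arm $B(p_i,\alpha-p_i)$ onto the vertex $v_i$, and the weight $v_i$ inherits is
\[
|V(B(p_i,\alpha-p_i))|=1+3p_i+3(\alpha-p_i)=3\alpha+1,
\]
independently of $p_i$. Hence $K_{\omega,T}$ is the weighted $(n+1)$-star with centre of weight $1$ and all $n$ leaves of weight $3\alpha+1$; the same description holds for $K'_{\omega,T'}$, so $K_{\omega,T}\cong K'_{\omega,T'}$ as weighted trees and $U_K(T)=W(K_{\omega,T})=W(K'_{\omega,T'})=U_{K'}(T')$. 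Notice that this half does not actually use the hypothesis $p=_1p'$.

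For $U_{E\setminus K}(T)=U_{E\setminus K'}(T')$ I will play the analogous game with the complementary edge set. Although $E\setminus K$ is a subforest rather than a subtree, the argument of Lemma~\ref{lemma:contracting} still goes through: for every $A\subseteq E\setminus K$ all core edges lie in $E\setminus A$, so $c$ together with all $v_i$ always sits in one connected component. Consequently, if $\widetilde T$ denotes the weighted tree obtained from $T$ by contracting all core edges, then $\lambda_T(E\setminus A)=\lambda_{\widetilde T}(E(\widetilde T)\setminus A)$ for every $A\subseteq E\setminus K$, and therefore $U_{E\setminus K}(T)=W(\widetilde T)$. The tree $\widetilde T$ has a central vertex $\widetilde c$ of weight $n+1$ from which hang two kinds of three-vertex arms: for each $4$-path inside some $B(p_i,\alpha-p_i)$, a path on $3$ vertices attached by an endpoint, giving $\sum_i p_i$ arms of the first kind; and for each $4$-star, a path on $3$ vertices attached by its middle vertex, giving $\sum_i(\alpha-p_i)=n\alpha-\sum_i p_i$ arms of the second kind.

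The crucial step is then to observe that $p=_1p'$ is precisely the equality $\sum_i p_i=\sum_i p'_i$, so $\widetilde T$ and $\widetilde T'$ have the same central weight and the same multiset of arms; hence $\widetilde T\cong\widetilde T'$ as weighted trees, $W(\widetilde T)=W(\widetilde T')$, and the second equality follows. The main obstacle I anticipate is the (mild) extension of Lemma~\ref{lemma:contracting} to subforests rather than subtrees, but this is a routine adaptation of the original proof with no essential changes.
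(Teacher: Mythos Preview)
Your proof is correct and follows exactly the approach of the paper: apply Lemma~\ref{lemma:contracting} and observe that the resulting weighted contractions $K_{\omega,T}$, $K'_{\omega,T'}$ (respectively $(E\setminus K)_{\omega,T}$, $(E'\setminus K')_{\omega,T'}$) are isomorphic weighted trees, the second pair precisely because $p=_1 p'$ forces $\sum_i p_i=\sum_i p'_i$ and hence $\sum_i(\alpha-p_i)=\sum_i(\alpha-p'_i)$. You are simply more explicit than the paper about the structure of the contracted trees and about the mild extension of Lemma~\ref{lemma:contracting} needed when $F=E\setminus K$ is a forest rather than a subtree.
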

\begin{proof}
Both assertions follow from Lemma \ref{lemma:contracting} after observing that   $K_{\omega,T}$ and $K'_{\omega,T'}$ are isomorphic weighted graphs and, also  $(E\setminus K)_{\omega,T}$ and $(E'\setminus K')_{\omega,T'}$ are isomorphic weighted graphs because  $p  =_1 p' $ and $\alpha - p  =_1 \alpha - p'$. 
\end{proof}

\begin{theorem}\label{thm:sameU}
Let $T = T_\alpha(p)$ and $T' = T_\alpha(p')$ for two $\alpha$-compatible sequences $p$ and $p'$ such that $p =_k p'$. Then 
\[U_{k+1}(T) = U_{k+1}(T').\]
\end{theorem}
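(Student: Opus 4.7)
My plan is to split $U_{k+1}(T) = \sum_{|A|\le k+1}\mathbf{x}_{\lambda(E\setminus A)}$ according to $j := |\{i\in [n] : cv_i\in A\}|$, the number of core edges contained in $A$, writing $V_j(T)$ for each partial sum. I will show $V_j(T)=V_j(T')$ for every $j$, handling $j=0$ by a contraction argument and $j\ge 1$ by a symmetric-polynomial argument.

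For $j=0$, every $A$ lies in $E\setminus K$, so Lemma \ref{lemma:contracting} (as in the proof of Proposition \ref{cor:core}) identifies $V_0(T)$ with the natural truncation (to $|A|\le k+1$) of the $W$-polynomial of the weighted tree $(E\setminus K)_{\omega,T}$; the bijection in that lemma preserves $|A|$, so the truncation transfers cleanly. This contracted weighted tree is a weighted copy of $B\bigl(\sum_i p_i,\,n\alpha-\sum_i p_i\bigr)$ with root weight $n+1$, depending on $p$ only through $\sum_i p_i$. Since $p=_k p'$ with $k\ge 1$ forces $\sum p_i=\sum p'_i$, the analogous trees for $T$ and $T'$ are isomorphic, and $V_0(T)=V_0(T')$.

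For $j\ge 1$, I parametrize each contributing $A$ by $J$ (with $|J|=j$) together with, for each $i$, a choice of ``cut type'' on each of the $p_i$ legs and each of the $\alpha-p_i$ claws of $B(p_i,\alpha-p_i)$. Letting $u_i$ and $v_i$ count the nontrivially cut legs and claws in branch $i$, the number of $A$'s with a fixed combinatorial profile equals $\prod_i\binom{p_i}{u_i}\binom{\alpha-p_i}{v_i}$ times a factor depending only on the profile, and the resulting partition $\lambda(E\setminus A)$ depends on the $p_i$'s only through $\alpha$ and the profile (the ``uncut'' contributions $3(p_i-u_i)+3(\alpha-p_i-v_i)=3\alpha-3u_i-3v_i$ absorb all residual $p_i$-dependence). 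Summing over profiles and symmetrizing over $S_n$-relabelings of branches expresses $V_j(T)$ as a symmetric polynomial in $(p_1,\ldots,p_n)$ of total degree $\sum_i(u_i+v_i)\le |A|-|J|\le k+1-j\le k$. By \cite[Cor.~7.7.2]{stanley1999enumerative} any such polynomial is a linear combination of power sums of degree $\le k$, so $p=_k p'$ yields $V_j(T)=V_j(T')$.

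The main obstacle is precisely the boundary between the two cases: applied naively to $j=0$, the symmetric-polynomial argument would only give a degree bound of $k+1$, one too high. The contraction step is what bypasses this obstruction, since after contracting the core the entire $p$-dependence of $V_0(T)$ collapses to the single power sum $\sum_i p_i$, whose equality for $p$ and $p'$ is automatic from $p=_1 p'$.
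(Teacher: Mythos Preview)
Your argument is correct and follows a route that is related to but genuinely different from the paper's. Both proofs isolate the contribution of edge-sets lying entirely outside the core and dispose of it by contracting the core edges, exactly as in Proposition~\ref{cor:core}. For the remaining edge-sets, however, the paper proceeds bijectively: it invokes Corollary~\ref{cor:samesubtrees} to obtain a type-preserving bijection $\Phi$ between subtrees of type $(q,t)$ with $\sum_i(q_i+t_i)\le k$ in $T$ and in $T'$, fixes an isomorphism for each matched pair, and then sends each mixed edge-set $A$ (which has at most $k$ non-core edges, hence sits inside some such subtree) through the isomorphism attached to the smallest subtree $S_A$ containing it. You bypass this bijective layer entirely: for each $j\ge 1$ you observe directly that every coefficient of $V_j(T)$, after grouping by profile, is a symmetric polynomial in $p_1,\dots,p_n$ of degree at most $k+1-j\le k$, and then conclude via power sums. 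Your approach is shorter and needs neither the subtree-counting Lemma~\ref{lemma:sym_poly} nor the construction of $\Phi$; the paper's approach, on the other hand, actually exhibits which edge-sets of $T$ and $T'$ correspond, rather than merely certifying that the generating functions agree. Your closing remark about the degree-$(k{+}1)$ obstruction at $j=0$ is exactly the point: in the paper this same obstruction is what forces the separate treatment via Proposition~\ref{cor:core}.
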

\begin{proof}
We give a bijection $\varphi$ between edge-subsets of $T$ and $T'$ of size at most $k+1$ such that $A$ and $\varphi(A)$ induce the same partition in $T$ and $T'$, respectively. By Corollary \ref{cor:core}, it suffices to define $\varphi$ on subsets of edges that intersect both the core of $T$ and its complement. 

Let $$\mathcal{S}_T=\bigcup_{\stackrel{(q,t)}{\sum_i(q_i+t_i)\leq k}} S_{q,t}(T),$$ and define $\mathcal{S}_{T'}$ analogously.
By Corollary~\ref{cor:samesubtrees}, there is a type-preserving bijection $\Phi:\mathcal{S}_T\rightarrow \mathcal{S}_{T'}$. Also, for each $S\in \mathcal{S}_{T}$ we fix an isomorphism between $S$  and $\Phi(S)$; this isomorphism is also a weighted-graph isomorphism between $S_{w,T}$ and $(\Phi(T))_{w,T'}$. 

Let $A$ be a subset of edges of $T$ with $|A|\leq k+1$ that intersects the core of $T$ and its complement. Since $A$ has at most $k$ edges in the complement of the core, it is contained in some subtree belonging to  $\mathcal{S}_T$. Let $S_A$ be the smallest such subtree, which is well defined since the intersection of elements of $\mathcal{S}_T$ is again in $\mathcal{S}_T$. Let $\varphi(A)$ be the edge-subset of $T'$ to which $A$ is mapped under the fixed bijection between $S_A$ and $\Phi(S_A)$. By construction, $A$ and $\varphi(A)$ contribute  the same term to $U_{k+1}(T)$ and $U_{k+1}(T')$, respectively.

To finish the proof it is  enough to notice that $\Phi(S_A)$ is the smallest subtree in $\mathcal{S}_{T'}$ that contains $\varphi(A)$, and thus $A$ can be recovered from $\varphi(A)$ and $\varphi$ is a bijection.

\end{proof}
\begin{proposition}\label{prop:notsameU}
Let $T = T_\alpha(p)$ and $T' = T_\alpha(p')$ for two sequences $p$ and $p'$ such that 
\[  p=_k p' \ \mbox{ but }\ \sum_i p_i^{k+1}\neq \sum_i (p_i')^{k+1}.\]
Then 
\[U_{k+2}(T) \neq U_{k+2}(T').\]
\end{proposition}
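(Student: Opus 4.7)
The plan is to single out one monomial $\mathbf{x}_{\mu_0}$ whose coefficient in $U_{k+2}(T_\alpha(p))$, as a function of $p_1,\ldots,p_n$, is a symmetric polynomial whose leading term in the power-sum basis is $\frac{1}{(k+1)!}\sum_i p_i^{k+1}$. Once this is in hand, the PTE relation $p=_k p'$ collapses every contribution of lower degree in the difference, leaving a nonzero residue proportional to $\sum_i p_i^{k+1}-\sum_i (p_i')^{k+1}$.

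The candidate partition is $\mu_0=(n+3(n-1)\alpha,\;3\alpha-2k-1,\;2^{k+1})$, modelled on the ``clean'' edge-subset obtained by deleting one core edge $\{c,v_i\}$ together with the middle edge $\{a_j,b_j\}$ of $k+1$ distinct $4$-path arms attached to $v_i$. Before anything, I observe that the hypothesis forces $\alpha\ge k+1$: otherwise the entries of $p$ and $p'$ would lie in a set of at most $k+1$ integers, and a Vandermonde argument applied to the linear system $\sum_i p_i^\ell=\sum_i (p_i')^\ell$ for $\ell=0,1,\ldots,k$ would force $p=p'$ as multisets, contradicting $\sum_i p_i^{k+1}\neq\sum_i (p_i')^{k+1}$. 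In particular $\mu_0$ is a legitimate partition.

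Next I would show that any edge-subset $A\subseteq E(T_\alpha(p))$ with $\lambda(E\setminus A)=\mu_0$ must satisfy $|A|=k+2$ (since $\mu_0$ has $k+3$ parts), must contain exactly one core edge $\{c,v_i\}$ (this is forced by the size $n+3(n-1)\alpha$ of the largest part, which can arise only from the union of $c$ with the $n-1$ untouched branches), and must have its remaining $k+1$ edges inside the branch $B(p_i,\alpha-p_i)$ realising there the partition $(3\alpha-2k-1,2^{k+1})$. A short arm-by-arm case analysis of that local partition---excluding any configuration that would produce singletons, and, when $\alpha\ge k+2$, any configuration producing $3$-vertex components---shows that $4$-star arms can only be kept whole, that $4$-path arms can only be kept whole or contribute the $2$-component $\{b_j,c_j\}$ via removal of $\{a_j,b_j\}$, and hence that the number of realizations in $B(p,\alpha-p)$ is exactly $\binom{p}{k+1}$ whenever $\alpha\ge k+2$. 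In the boundary case $\alpha=k+1$ a few exceptional configurations may appear (for example with $v_i$ itself lying in a $2$-component), but a direct enumeration confirms that their total contribution is a polynomial in $p$ of degree strictly less than $k+1$.

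Summing over $i$, the coefficient of $\mathbf{x}_{\mu_0}$ in $U_{k+2}(T_\alpha(p))$ therefore equals $\sum_{i=1}^n N_\alpha(p_i)$, where $N_\alpha$ is a polynomial of degree $k+1$ with leading coefficient $1/(k+1)!$. Writing $\sum_i N_\alpha(p_i)$ in the power-sum basis, every term of degree $\le k$ cancels in the difference under $p=_k p'$, and we obtain
\[
[\mathbf{x}_{\mu_0}]U_{k+2}(T)\;-\;[\mathbf{x}_{\mu_0}]U_{k+2}(T')\;=\;\frac{1}{(k+1)!}\Bigl(\sum_i p_i^{k+1}-\sum_i (p_i')^{k+1}\Bigr)\;\ne\;0,
\]
so $U_{k+2}(T)\neq U_{k+2}(T')$. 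The main obstacle is the local enumeration inside $B(p,\alpha-p)$: I need to check cleanly that no non-clean configuration slips in for $\alpha\ge k+2$, and that at $\alpha=k+1$ the few surviving exceptional configurations contribute only lower-degree polynomials in $p$, so that the leading coefficient $1/(k+1)!$ of $N_\alpha$ is preserved.
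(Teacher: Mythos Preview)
Your approach is essentially the paper's own proof. Your partition $\mu_0=(n+3(n-1)\alpha,\;3\alpha-2k-1,\;2^{k+1})$ is exactly the monomial $x_2^{k+1}x_{3\alpha-1-2k}x_{N-3\alpha-1}$ singled out in the paper (since $N-3\alpha-1=n+3(n-1)\alpha$), and both arguments compute its coefficient as essentially $\sum_i\binom{p_i}{k+1}$ and then finish via the power-sum expansion. Your Vandermonde justification of $\alpha\ge k+1$ is a valid alternative to the paper's divisibility argument via $(x-1)^{k+1}\mid\sum_i(x^{p_i}-x^{p_i'})$.

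One remark on the case analysis you flag as an obstacle: you are being more cautious than necessary. When $\alpha=k+1$ and $k\ge 2$ one has $3\alpha-2k-1=k+2\ge 4$, which already exceeds the size of any component lying entirely inside a single arm; so the ``clean'' enumeration $\binom{p_i}{k+1}$ goes through unchanged, exactly as for $\alpha\ge k+2$. Genuinely exceptional configurations inside $B(p_i,\alpha-p_i)$ occur only at $k=1,\ \alpha=2$, where $3\alpha-2k-1=3$ coincides with an arm size; there a short direct count gives $\binom{p_i+1}{2}$ instead of $\binom{p_i}{2}$, confirming your claim that the extra terms have degree $<k+1$. (A separate harmless wrinkle, not mentioned in either proof, is that for $n=2$ the two sides of a core-edge cut have equal size $3\alpha+1$, which doubles the count to $2\sum_i\binom{p_i}{k+1}$; this does not affect the conclusion.) With these clarifications your local enumeration closes cleanly, and the proof is complete along the same lines as the paper's.
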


\begin{proof}
We start by showing that we must have $k+1\leq \alpha$. It is easy to see that $p=_kp'$ implies that $(x-1)^{k+1}$ divides $\sum_i (x^{p_i}-x^{p_i'})$ (see for instance\cite[Prop.1]{borwein1994prouhet}). This forces $k+1\leq \max \{\max_i p_i, \max_i p_i'\}\leq \alpha$.

Let $N=n(3\alpha+1)+1$ be the number of vertices of $T$ and $T'$. We claim that the coefficients of $x_2^{k+1}x_{3\alpha-1-2k} x_{N-3\alpha-1}$ in $U(T)$ and $U(T')$ are different (note that since $k+1\leq \alpha$ and $n\geq 2$ by assumption, the expressions $x_{3\alpha-1-2k}$ and  $x_{N-3\alpha-1}$ are indeed variables). The only way to obtain such a coefficient in $U(T)$ is by removing one of the edges $cv_i$ and $k+1$ among the $p_i$ edges adjacent to $v_i$, and the same applies to $T'$.
Hence,
\begin{align*}
& [x_2^{k+1}x_{3\alpha-1-2k} x_{N-3\alpha-1}]U(T)=\sum_{i=1}^n \binom{p_i}{k+1}, \\
& [x_2^{k+1}x_{3\alpha-1-2k} x_{N-3\alpha-1}]U(T')=\sum_{i=1}^n \binom{p'_i}{k+1}.
\end{align*}
The difference $\sum_{i=1}^n \binom{p_i}{k+1}-\sum_{i=1}^n \binom{p'_i}{k+1}$ can be expressed as a linear combination of the power-sum symmetric polynomials, with the highest degree term being $\sum_i(p_i^{k+1}-(p'_i)^{k+1})/(k+1)!$. Thus, it is clear that both coefficients are different.
\end{proof}

Theorem~\ref{thm:sameU} and Proposition~\ref{prop:notsameU} together establish Theorem~\ref{thm:PTEencodes}.

\section{Recognizing PTE-trees with the $U$-polynomial}
\label{sec:recognizing}
In this section we show that the $U_1$-polynomial recognizes whether a tree is PTE or not. To see this, we recall some techniques introduced by Orellana and Scott \cite{orellana2014graphs}.
Let $T$ be a tree. Given any vertex $v$ in $T$, a \emph{branch} of $v$ is any subtree of $T$ having $v$ as a leaf-vertex. Then, the \emph{branch-weight} of $v$ is the maximum number of edges in any branch of $v$. The \emph{centroid} is defined as the set of vertices with minimum branch-weight. It is known that the centroid contains either one or two vertices (in which case they are connected by an edge). 
We say that two edges  $e$ and $f$ \emph{attract} if the unique path joining $e$ and the centroid passes through $f$ or the unique path joining $f$ with the centroid passes through $e$. Otherwise, we say that $e$ and $f$ \emph{repel}.
When $T$ has a unique centroid $c$, we set $c$ as the root of $T$ and label the edges of $T$ as follows. For each edge $e$ in $T$, its label $\theta_e$ is the unique positive integer satisfying $\lambda_T(E\setminus \{e\})=(N-\theta_e,\theta_e)$ with $N-\theta_e\geq \theta_e$, where $N$ denotes the number of vertices of $T$. We denote by  $\mathcal{M}_T$ the  multiset of labels of $T$. The following lemma summarizes the tools from \cite{orellana2014graphs} that we need here.
\begin{lemma}
\label{lemma:label_compatible}
Let $T$ be a tree labeled as above.  Then the labels along any path starting at the centroid are strictly decreasing. Moreover, for each edge $e$, its label is the the sum of the labels of its child-edges plus one. 
In particular, edges with the same labels always repel. 
\end{lemma}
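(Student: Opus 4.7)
The plan is to root $T$ at the unique centroid $c$ and reinterpret the label $\theta_e$ geometrically. For any non-root vertex $v$, write $T_v$ for the subtree of the rooted tree hanging from $v$, i.e., $v$ together with all its descendants. The first step I would take is to prove the key claim that $|T_v| \leq N/2$ for every non-root vertex $v$. This follows from the standard characterization of the centroid --- no component of $T - c$ has more than $N/2$ vertices --- combined with the fact that $T_v$ is either such a component (when $v$ is a child of $c$) or contained in one (when $v$ is deeper in the tree). With this in hand, if $e = uv$ is the edge from parent $u$ to child $v$, the two components of $T - e$ have sizes $|T_v|$ and $N - |T_v|$, and since $|T_v| \leq N/2 \leq N - |T_v|$, the definition of $\theta_e$ forces $\theta_e = |T_v|$.

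Given this reinterpretation, both displayed assertions become essentially transparent. For the monotonicity along a path $c = v_0, v_1, \ldots, v_r$ with edges $e_i = v_{i-1}v_i$, I would simply observe that $T_{v_{i+1}}$ is a proper subtree of $T_{v_i}$: it omits $v_i$ itself as well as all branches at $v_i$ other than the one through $v_{i+1}$. Hence $\theta_{e_{i+1}} = |T_{v_{i+1}}| < |T_{v_i}| = \theta_{e_i}$. For the recursive identity, if $e = uv$ has child-edges $f_1, \ldots, f_d$ joining $v$ to its children $w_1, \ldots, w_d$, then $T_v = \{v\} \sqcup T_{w_1} \sqcup \cdots \sqcup T_{w_d}$, which gives $\theta_e = |T_v| = 1 + \sum_{j=1}^d |T_{w_j}| = 1 + \sum_{j=1}^d \theta_{f_j}$.

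The third statement then follows formally from monotonicity: if $e$ and $f$ attract, then by definition one of them --- say $f$ --- lies on the unique path from the centroid to $e$, so both appear on some downward path from $c$ with $f$ preceding $e$. By the first part, $\theta_f > \theta_e$, contradicting $\theta_e = \theta_f$. The only step carrying real content is the identification $\theta_e = |T_v|$, since this is where the centroid hypothesis does all the work; once that is in place the rest reduces to a short inspection of the rooted tree, and I would expect the whole argument to fit on a few lines.
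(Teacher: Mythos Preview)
Your argument is correct. The identification $\theta_e = |T_v|$ via the centroid property is exactly the right observation, and the three assertions then follow as you describe. One minor sharpening: with a \emph{unique} centroid one actually has $|T_v| < N/2$ strictly (otherwise the neighbor of $c$ in a component of size $N/2$ would also be a centroid), which removes any ambiguity in the definition of $\theta_e$; but your weak inequality already suffices for the conclusion.

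As for comparison with the paper: there is nothing to compare, since the paper does not prove this lemma. It is stated there only as a summary of facts taken from Orellana and Scott \cite{orellana2014graphs}, without proof. Your write-up supplies a self-contained proof along the standard lines one would expect.
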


The following result is a more precise statement for  Proposition \ref{prop:recog}.
\begin{proposition}
Let $\alpha$ be a positive integer, $p$ be an $\alpha$-compatible sequence of length $n$ and $T = T_\alpha(p)$ the associated PTE-tree. Let $N = (3\alpha +1)n+1$ be the total number of vertices of $T$ and  $\beta = \sum_i p_i$. Then, 
the following assertions hold:
\begin{enumerate}[i]
\item We have $\mathcal{M}_T = \{1^{2n\alpha-\beta},2^{\beta}, 3^{n\alpha}, (3\alpha+1)^{n}\}$.
\item If $T'$ is another tree with $U_1(T)=U_1(T')$, then  $T'$ is a PTE-tree $T_\alpha(p')$ with $\sum_i p'_i = \beta$.
\end{enumerate}
\end{proposition}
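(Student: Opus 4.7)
For part (i) my plan is a direct structural computation. First I note that each $B(p_i,\alpha-p_i)$ has $3\alpha+1$ vertices, so $T$ has $N=n(3\alpha+1)+1$ vertices, and $c$ is the unique centroid: each branch at $c$ consists of a copy of $B(p_i,\alpha-p_i)$ together with the edge $cv_i$, with branch-weight $3\alpha+1$, while every other vertex has a branch through $c$ of weight at least $(n-1)(3\alpha+1)+1>3\alpha+1$ when $n\ge 2$. Then I compute $\theta_e$ edge-by-edge: the $n$ central edges $cv_i$ each give label $3\alpha+1$; inside each $4$-path attached at $v_i$ the three edges read outward give labels $3,2,1$; and inside each $4$-star attached at $v_i$ they give labels $3,1,1$. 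Aggregating over the $p_i$ paths and $\alpha-p_i$ stars in each of the $n$ branches yields the stated multiset.

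For part (ii) the strategy is to reconstruct $T'$ layer-by-layer using Lemma~\ref{lemma:label_compatible}. From $U_1(T')$ I extract $N=|V(T')|$ (from the monomial $x_N$) and the multiset $\{(N-\theta_e,\theta_e):e\in E(T')\}$, which determines $\mathcal{M}_{T'}=\mathcal{M}_T$. A short case-check shows $N/2\notin\{1,2,3,3\alpha+1\}$ (using $\alpha\ge 1$ and $n\ge 2$), so $T'$ has a unique centroid $c'$ and the labeling is well defined. The strict-decrease property forces every edge of maximum label $3\alpha+1$ to be incident to $c'$; since the labels of the edges at $c'$ sum to $N-1=n(3\alpha+1)$ and the $n$ max-label edges already contribute this total, they are exactly the edges at $c'$, so $\deg(c')=n$. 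Denote the other endpoints by $v'_1,\dots,v'_n$.

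Finally I analyze the children of each $v'_i$. Because $3\alpha+1$ is the only label exceeding $3$, every label-$3$ edge in $T'$ must have a max-label parent and is therefore a child of some $v'_i$. Writing $x_i,y_i,z_i$ for the number of children of $v'_i$ of labels $3,2,1$, the child-sum rule gives $3x_i+2y_i+z_i=3\alpha$; summing over $i$ and using $\sum_i x_i=n\alpha$ (the total number of label-$3$ edges in $\mathcal{M}_{T'}$) forces $y_i=z_i=0$ and $x_i=\alpha$ for every $i$. Each of the $\alpha$ label-$3$ children of $v'_i$ roots a $3$-vertex subtree with child-label signature either $\{2,1\}$ (the $4$-path pattern) or $\{1,1\}$ (the $4$-star pattern), so letting $p'_i$ be the number of path-type children yields $T'\cong T_\alpha(p')$; counting label-$2$ edges (one per path-subtree) then gives $\sum_i p'_i=\beta$. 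The main obstacle is the double-counting forcing $y_i=z_i=0$: it depends crucially on first establishing that label-$3$ edges only appear at the first layer, which is what makes $\sum_i x_i=n\alpha$ tight in the summed identity and pins down the rigid $T_\alpha(p')$ structure.
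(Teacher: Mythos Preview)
Your proof is correct and follows essentially the same layer-by-layer reconstruction as the paper's own argument: establish a unique centroid from the absence of a label $N/2$, place the $n$ edges of label $3\alpha+1$ at the root, use the child-sum rule to force exactly $\alpha$ label-$3$ children at each $v'_i$ with no label-$1$ or label-$2$ children, and then resolve each label-$3$ subtree into a $4$-path or $4$-star. Your presentation is in fact slightly more explicit than the paper's in two places---the verification that $N/2\notin\{1,2,3,3\alpha+1\}$, and the double-counting $3\sum_i x_i+2\sum_i y_i+\sum_i z_i=3n\alpha$ forcing $y_i=z_i=0$---but the underlying ideas coincide.
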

\begin{proof}
The first assertion follows directly from the construction of $T_\alpha(p)$. 

For the second assertion, first recall that a tree  has a unique centroid if and only if there is no edge with label $N/2$, where $N$ is the number of vertices. That is, the property of having a unique centroid is determined by the $U_1$-polynomial of the tree. In particular, since $T$ has a unique centroid, so has $T'$. Since edges with the same label repel, it is clear that the $n$ edges with label $3\alpha+1$ must be incident to the centroid. Since $N = (3\alpha+1)n+1$, it follows from Lemma \ref{lemma:label_compatible}.that no other edge may be incident to the centroid. Thus, each edge with label $3$ must be attached to one edge of label $3\alpha+1$. Since we have $n\alpha$ such edges and $n$ edges of label $3\alpha+1$, again by Lemma \ref{lemma:label_compatible}, each 
edge with label $3\alpha+1$ has exactly $\alpha$ child-edges with label $3$ and cannot have any other child-edges. By now, edges with label $2$ and $1$ can only be
incident with edges of label $3$. 

It is easy to see that each edge with label $3$ is either incident to two edges of label $1$ or to one edge of label $2$, which in turn is incident to an edge of label $1$.
For each $e_i$ with label $3\alpha+1$ set $p'_i$ be the number of edges of label $3$ of the second kind that are incident to $e_i$. It is clear 
that $\sum_i p_i'=\beta$ and that $T' = T_\alpha(p')$ and the proof is finished.  
\end{proof}

\section*{Acknowledgments}
The first author is partially supported by FONDECYT 11121510. The second author is partially supported by projects MTM2014-54745-P and Gen. Cat. DGR 2014SGR46. She also thanks Josep M. Brunat for fruitful discussions about the Prouhet-Tarry-Escott problem. The third author is partially supported by N\'ucleo Milenio Informaci\'on y Coordinación en Redes ICM/FIC RC130003. The first and third author are partially supported by Basal PFB-03 CMM, Universidad de Chile.

\end{document}